\begin{document}

\def\Z{\mathbb Z}
\def\R{\mathbb R}
\def\C{\mathbb C}
\def\Q{\mathbb Q}
\def\P{\mathbb P}
\def\N{\mathbb N}
\def\L{\mathbb L}
\def\HH{\mathrm{H}}
\def\ss{X}

\def\conj{\overline}
\def\Beta{\mathrm{B}}
\def\const{\mathrm{const}}
\def\ov{\overline}
\def\wt{\widetilde}
\def\wh{\widehat}
\def\red{\color{red}}

\renewcommand{\cr}{\mathop{\mathrm{cr}}\nolimits}
\renewcommand{\Im}{\mathop{\mathrm{Im}}\nolimits}
\renewcommand{\Re}{\mathop{\mathrm{Re}}\nolimits}
\newcommand{\codim}{\mathop{\mathrm{codim}}\nolimits}
\newcommand{\id}{\mathop{\mathrm{id}}\nolimits}
\newcommand{\Aut}{\mathop{\mathrm{Aut}}\nolimits}
\newcommand{\lk}{\mathop{\mathrm{lk}}\nolimits}
\newcommand{\sign}{\mathop{\mathrm{sign}}\nolimits}
\newcommand{\pt}{\mathop{\mathrm{pt}}\nolimits}
\newcommand{\rk}{\mathop{\mathrm{rk}}\nolimits}
\newcommand{\SKY}{\mathop{\mathrm{SKY}}\nolimits}
\newcommand{\st}{\mathop{\mathrm{st}}\nolimits}
\def\Jet{{\mathcal J}}
\def\FC{{\mathrm{FCrit}}}
\def\sS{{\mathcal S}}
\def\lcan{\lambda_{\mathrm{can}}}
\def\ocan{\omega_{\mathrm{can}}}
\newcommand{\proj}{\mathop{\mathrm{proj}}\nolimits}
\newcommand{\cg}{\mathop{\mathrm{cg}}\nolimits}
\newcommand{\vcg}{\mathop{\mathrm{vcg}}\nolimits}

\def\ds{\displaystyle}

\newtheorem{mainthm}{Theorem}
\newtheorem{thm}{Theorem}[section]
\newtheorem{lem}[thm]{Lemma}
\newtheorem{prop}[thm]{Proposition}
\newtheorem{cor}[thm]{Corollary}
\newtheorem{conjecture}{Conjecture}

\theoremstyle{definition}
\newtheorem{exm}[thm]{Example}
\newtheorem{rem}[thm]{Remark}
\newtheorem{df}[thm]{Definition}

\title{Manturov Projection for Virtual Legendrian Knots in $ST^*F$}
\author{Vladimir Chernov, Rustam Sadykov}
\date{\today}
\address{Department of Mathematics, 6188 Kemeny Hall,
Dartmouth College, Hanover, NH 03755, USA}
\email{Vladimir.Chernov@dartmouth.edu}
\address{Department of Mathematics, 
138 Cardwell Hall,
1228 N Martin Luther King Jr Drive, Kansas State University, Manhattan, KS 66506}
\email{sadykov@ksu.edu}
\begin{abstract}
Kauffman virtual knots are knots in thickened surfaces $F\times \R$ considered up to isotopy, stabilizations and destabilizations, and diffeomorphisms of $F\times \R$ induced by orientation preserving diffeomorphisms of $F$. Similarly,  virtual Legendrian knots,  introduced by Cahn and Levi~\cite{CahnLevi}, are Legendrian knots in $ST^*F$ with the natural contact structure. Virtual Legendrian knots are considered up to isotopy, stabilization and destabilization of the surface away from the front projection of the Legendrian knot, as well as up to contact isomorphisms of $ST^*F$ induced by  orientation preserving diffeomorphisms of $F$. 

We show that there is a projection operation $\proj$ from the set of virtual isotopy classes of Legendrian knots to the set of isotopy classes of Legendrian knots in $ST^*S^2$. This projection is obtained by substituting some of the classical crossings of the front diagram with virtual crossings. It restricts to the identity map on the set of virtual isotopy classes of classical Legendrian knots. In particular, the projection $\proj$ extends invariants of Legendrian knots to invariants of virtual Legendrian knots. Using the projection $\proj$, we show that the virtual crossing number of every classical Legendrian knot equals its crossing number. We also prove that the virtual canonical genus of a Legendrian knot is equal to the canonical genus.

The construction of $\proj$ is inspired by the work of Manturov. 
\end{abstract}
\maketitle

\section{Introduction}
A {\it Gauss Diagram} of a knot $K$ in $\R^3$ is a circle parametrizing the knot. For each double point in the knot diagram, the two preimages on the circle are connected by a chord oriented from the preimage of the upper strand to the preimage of the lower strand. These chords are equipped with the sign $\pm$ corresponding to the sign of the double point in the diagram. The Reidemeister moves for knot diagrams can be expressed in terms of Gauss Diagram moves, as described by Polyak and Viro in \cite{PolyakViro}. The equivalence classes of Gauss Diagrams modulo these moves are {\em virtual knots,\/} see Kauffman~\cite{Kauffman}.

As proved by Goussarov, Polyak, and Viro~\cite{GPV} (see also Kauffman~\cite{KauffmanTalks}), two non isotopic knots  do not become equivalent in the virtual category. Carter, Kamada, Saito~\cite{CKS} proved that virtual knots can be viewed as equivalence classes of knots in a thickened surface $F\times \R$ modulo isotopies and stabilizations/destabilizations of $F$ (adding and deleting handles) away from the knot projection, as well as automorphisms of  $F\times \R$ of the form $\alpha\times \id_\R$ where $\alpha$ is an orientation preserving diffeomorphism of $F$.

 We say that a knot $K$ in a thickened surface $F\times K$ is \emph{irreducible} if  there is no destabilization of $F$ away from the projection of $K$ to $F$. 
Given a knot $K$ in a thickened surface $F\times \R$, the Kuperberg theorem~\cite{Kuperberg} asserts that $K$ is virtually isotopic to an irreducible knot $K'$ in a thickened surface $F'\times \R$. Furthermore, the pair $(F'\times \R, K')$ is unique up to an automorphism of $F'\times \R$ induced by orientation preserving automorphism of $F'$, and the surface $F'$ can be obtained from $F$ using only destabilizations and orientation preserving automorphisms.

A {\it contact form\/} on an odd dimensional manifold $Z^{2k+1}$ is a $1$-form $\alpha$ such that $\alpha \wedge d\alpha^{k}$ is nowhere zero. A {\em cooriented contact structure} on $Z^{2k+1}$ is a cooriented (transversally oriented) hyperplane distribution such that it is the kernel of some contact form with the coorientation pointing into the half spaces where $\alpha$ is positive.

A contact structure is characterized as maximally non-integrable hyperplane distribution, meaning that the maximum dimension of an everywhere tangent submanifold is $k$ in $(Z^{2k+1}, \ker \alpha)$. Such a submanifold $L^k$ is called {\it Legendrian.\/}

The {\it spherical cotangent bundle} $ST^*M$ of $M$ is the space of all nonzero linear functionals $T_pM\to \R$ considered up to multiplication by a positive scalar. It is equipped with the natural projection $\pi:ST^*M\to M$. The space $ST^*M$ has a {\it natural contact structure\/} where the contact plane at a linear functional $l\in ST^*M$ is $d\pi^{-1} (\ker l)$.

A Legendrian submanifold $L\subset ST^*M$ is fully described by its naturally cooriented  {\it front} projection $\pi(L)\subset M$. Note that in general the front of $L$ is not immersed and has various Legendrian singularities such as cusps.

{\it Virtual Legendrian knots\/} were introduced by Cahn and Levi~\cite{CahnLevi}. These are Legendrian knots $L$ in $ST^*F$, where $F$ is a surface (or more generally in $ST^*M$, where $M$ is a higher dimensional manifold), considered modulo Legendrian isotopies and modifications of $ST^*F$ induced by stabilization and destabilization of $F$ away from the front projection of $L$, as well as contact automorphisms of $ST^*F$ induced by orientation preserving diffeomorphisms of $F$.  They showed that all the alternative view points of the classical virtual knot theory in $F\times \R$ also hold for virtual Legendrian knots in $ST^*F$. That is they can be viewed as equivalence classes of formal generalized Gauss diagrams with special symbols for various cusps and crossing points of the front projection and they can be viewed as cooriented front diagrams with extra virtual crossings.

The authors of this paper proved a generalization of the Kuperberg theorem to virtual Legendrian knots~\cite{ChernovSadykovVirtual} and, in particular, concluded that two non-isotopic Legendrian knots in $ST^*S^2$ are also not virtually Legendrian isotopic. 

We also applied~\cite{ChernovSadykovConjectures} the virtual Legendrian knot theory to the study of causality in Borde-Sorkin spacetimes~\cite{Borde, Sorkin} and made a few related conjectures in the spirit of the works of the first author and Nemrovski~\cite{Chernov, CN1, CN2, CN3} that were motivated by the Low Conjecture~\cite{Low0, Low1, Low2, Low3} and the Legendrian Low Conjecture of Natario and Tod~\cite{NatarioTod} from the Penrose school of General Relativity~\cite{Penrose}.

\section{Crossing number and canonical genus corollaries of the Projection Theorem}

 In section~\ref{s:proj} we will prove the Projection Theorem (Theorem~\ref{mainprojection}) for virtual Legendrian knots asserting that there is a map $upr$ that associates a classical Legendrian knot diagram in $\R^2$ to each virtual Legendrian knot diagram in $\R^2$. Furthermore, the generalized Gauss diagram of the Legendrian knot $upr(K)$ is obtained from the generalized Gauss diagram of the virtual Legendrian knot $K$ by erasing some of the chords and leaving the other chords intact,  and  if $K$ is virtual Legendrian isotopic to a classical Legendrian knot, then $upr(K)$ belongs to the virtual Legendrian isotopy class of $K$.  

In this section we will use the Projection Theorem to deduce two consequences, namely, that the virtual crossing number of every classical Legendrian knot equals its crossing number, and that the virtual canonical genus of a Legendrian knot coincides with the canonical genus.

\begin{df} Given a knot diagram $D$ of the knot $K$ its {\it canonical genus} $g(D)$ is the genus of a  \emph{canonical surface $S$} obtained as follows: smoothen the classical crossings of $D$ using orientation of the knot branches and glue the resulting ovals with disks. Then glue the bands to the disks at each crossing.  There is an obvious copy $K'$ of the knot $K$ over the surface $S$. We say that $(K', S)$ is a \emph{canonical representative} of the knot diagram $D$. 
The {\it canonical genus\/} $\cg(K)$ is the minimum of $g(D)$ over all the knot diagrams of $D.$  Clearly this procedure gives also the {\it virtual canonical genus $\vcg(K_v)$} of a virtual knot $K_v$ with the minimum taken over the class of virtual knot diagrams of $K_v.$ It is clear that when the virtual knot $K_v$ is isotopic to a classical knot $K$ we have $\vcg(K_v)\leq \cg(K)$, see Stoimenov, Tchernov, Vdovina~\cite{SCV}. In~\cite{ChernovManturov} the first author used the Manturov Projection Theorem to prove that for a classical knot $K$ we have $\vcg(K)=\cg(K)$.
\end{df}

It is easy to see that the quantities $g(D)$ and hence $\vcg(L_v)$ and $\cg(L)$ are well defined for the classical and virtual Legendrian front diagram and we clearly have the inequality $\vcg(L_v)\leq \cg(L).$ Note that the knot $L$ does not have to be zero homologous and hence the actual Seifert surface given by the above procedure does not have to exist. Repeating the argument of~\cite{ChernovManturov} but using Projection Theorem~\ref{mainprojection} instead of the Manturov Projection Theorem, we get the following Theorem.

\begin{thm} If a virtual Legendrian knot $L_v$ is virtually isotopic to the classical Legendrian knot $L\subset ST^*S^2$ then $\vcg(L_v)=\cg(L).$
\end{thm}

Define the {\it crossing number\/} $\cr(L)$ of a Legendrian knot in $ST^*F$ to be the minimal number of crossing points in its front diagram. Similarly define the {\it virtual crossing number\/} $\cr_v(L_v)$ of a virtual Legendrian knot $L_v$ to be the minimal number of classical crossing points in its virtual front diagram.

Similar to Belousov~\cite{Belousov} definition for classical knots, define the {\it $k$-arc crossing number} $\cr^k(L)$ to be the minimal number of classical crossing numbers among all presentation of $L$ by all diagrams that can be decomposed into $k$ arcs without classical self-intersection points. Similarly, define the {\it virtual $k$-arc crossing number\/} $\cr_v^k(L)$ to be the minimal number of classical crossings among all diagrams of a virtual Legendrian knot $L$ that can be decomposed into $k$ arcs without self-intersection points.

Clearly for a Legendrian knot $L$ we have $\cr_v(L)\leq \cr(L)$ and for all positive $k\in \N$ we have $\cr_v^k(L)\leq \cr_v(L)$, and $\cr^{k+1}_v(L)\leq \cr^k_v(L)$ for all positive $k\in \N.$

\begin{thm}\label{kcrossing}
For a classical Legendrian knot $L\subset ST^*S^2$ we have $\cr_v(L)=\cr(L).$ Similarly for a classical Legendrian knot $L$ we have $\cr^{k+1}(L)\leq \cr^k(L)$ and for a Legendrian knot $L$ and positive $k\in\N$ we have $\cr^k_v(L)=\cr^k(L).$
\end{thm}

\begin{proof}
Take a virtual diagram of $L$ with the minimal number of classical crossings. By the projection Theorem~\ref{mainprojection} we can erase some of these crossings and get the same classical Legendrian knot, thus we have the other inequality $\cr_v(L)\geq \cr(L).$

The remaining inequality for the $k$-arc virtual and $k$-arc classical crossing number is proved similarly.
\end{proof}    

\begin{rem}
For classical knots in $\R^3$ the $k$-arc crossing number was introduced by Belousov~\cite{Belousov}. The statements similar to Theorem~\ref{kcrossing} for Kauffman rather than Legendrian virtual knots was first proved by Belousov, Chernov, Malyutin and Sadykov~\cite{BCMS}.    
\end{rem}

\section{Homological parity}

Let $D$ be the generalized Gauss diagram of a Legendrian knot $K$ in the spherical cotangent bundle $ST^*F$ of a closed orientable surface $F$. Let $\pi\colon ST^*F\to F$ denote the projection. 
We will denote the set of double points of the projected curve $\pi(K)$ by $V(K)$ or $V(D)$. The set $V(K)$ coincides with the set of chords of the generalized Gauss diagram $D$, and with the set of classical crossings of the planar diagram of the virtual knot.

 A \emph{parity} $p$ on the isotopy class $\mathcal{K}$ of Legendrian knots in $ST^*F$ with coefficients in an abelian group $A$ is a family of functions $p_K\colon V(K)\to A$ parametrized by the Legendrian knots  $K$ in the isotopy class $\mathcal{K}$ such that  
\begin{itemize}
\item if the Legendrian knot $K'$ is obtained from a Legendrian knot $K$ by a Legendrian Reidemeister move, then $p_K(v)=p_{K'}(v)$ for all crossings $v$ that are not involved in the Reidemeister move, 
\item $p_K(v_1)+p_K(v_2)=0$ if there is a second Legendrian Reidemeister move that cancels the two crossings $v_1$ and $v_2$, and
\item $p_K(v_1)+p_K(v_2)+p_K(v_3)=0$ if there is a third Legendrian Reidemeister move involving the crossings $v_1$, $v_2$ and $v_3$. 
\end{itemize}

We will next review the definition of the \emph{homological parity} given by Ilyutko, Manturov, and Nikonov in \cite{IMN}. The homological parity $p_u$ is a function on the set $V(D)$ with values in $H_1(F; \Z_2)/[K]$ where $[K]$ stands for the homology class represented by $\pi(K)$. Given a double point $v$ of the projected curve $\pi(K)$, the orientation of $K$ defines a unique smoothing of $\pi(K)$ at $v$ that breaks the curve $\pi(K)$ into two components $K_{v,1}$ and $K_{v,2}$. More precisely, suppose that the curve $\pi(K)$ is parametrized by a map $\varphi\colon S^1\to F$ such that $\varphi(p_1)=\varphi(p_2)=v$. Let $\alpha$ and $\beta$ be the two arcs in $S^1$ between $p_1$ and $p_2$. Then the two components $K_{v,1}$ and $K_{v,2}$ of $K$ are given by $\varphi(\alpha)$ and $\varphi(\beta)$.

The equivalence class of the homology class $[K_{v,1}]$ coincides with that of $[K_{v,2}]$ in $H_1(F; \Z_2)/[K]$ and will be denoted by $[v]$. The homological parity associates with each crossing $v$ the equivalence class $[v]$ in $H_1(F; \Z_2)/[K]$. By \cite[Lemma 3.3]{IMN}, the homological parity is a well-defined parity that reduces to a parity on homotopy classes of generically immersed curves in $F$. 

The $\Z_2$-valued homology parity $p_u^{\Z_2}$ is obtained from the homology parity $p_u$ by postcomposing the functions $p_u$ with the function $H_1(F; \Z_2)/[K]\to \Z_2$ that takes the zero element to the zero element and non-trivial elements to non-trivial elements. We note that this function is not a homomorphism. We will use the same notation $p_u^{\Z_2}$ for parities over all closed oriented surfaces $F$ and all isotopy classes of knots in $ST^*F$. Then, strictly speaking, $p_u^{\Z_2}$ represents a family of parities parametrized by closed oriented surfaces $F$ and isotopy classes of knots in $ST^*F$. Slightly abusing notation, we will simply refer to $p_u^{\Z_2}$ as a parity.

The parity $p_u^{\Z_2}$ gives rise to the definition of odd and even crossings. Namely, 
we say that the crossing $v$ is \emph{even} if $[v]=0$, and \emph{odd} otherwise. 
Recall that a stabilization of a knot $K$ in $ST^*F$ is obtained by stabilizing the surface $F$ away from the front projection $\pi(K)$. We say that the stabilization of $F$ is \emph{trivial}, if it is obtained by stabilizing $F$ by removing two open discs in the interior of a disc in $F\setminus \pi(K)$, and then attaching a handle along the two created boundary components. Equivalently, a trivial stabilization of a surface $F$ is obtained by removing an open disc in $F\setminus \pi(K)$ and attaching along the new boundary component a copy of a torus without an open disc. 
The operation inverse to a trivial stabilization is a \emph{trivial destabilization}. 

\begin{lem}\label{l:pr5.1} The parity $p_u^{\Z_2}$ does not change under trivial stabilizations and trivial destabilizations of Legendrian knots.  
\end{lem}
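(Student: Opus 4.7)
The plan is to reduce the problem to a single homology calculation on the common punctured subsurface $F\setminus D$, where $D\subset F\setminus\pi(K)$ is the disc in which the trivial stabilization is performed. Since $D$ is disjoint from $\pi(K)$, the crossings of $K\subset ST^*F$ and of the stabilized knot $K'\subset ST^*F'$ (with $F'=F\#T^2$) are canonically identified, and at every crossing $v$ the curves $\pi(K)$, $\pi(K_{v,1})$, $\pi(K_{v,2})$ are cycles inside $F\setminus D$. Writing $\alpha,\beta\in H_1(F\setminus D;\Z_2)$ for the classes of $[K]$ and $[K_{v,1}]$, the lemma amounts to showing that the condition ``$\beta\in\{0,\alpha\}$'' is detected equivalently by the inclusions $i\colon F\setminus D\hookrightarrow F$ and $j\colon F\setminus D\hookrightarrow F'$.

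The two key facts I would establish are the following. First, $i_*\colon H_1(F\setminus D;\Z_2)\to H_1(F;\Z_2)$ is an isomorphism; this is because $F\setminus D$ deformation retracts onto a wedge of $2g$ circles so both groups are $\Z_2^{2g}$, and $i_*$ is surjective because any cycle in the closed surface may be pushed off the disc $D$. Second, there is a collapse map $c\colon F'\to F$ that crushes the attached punctured torus back to a disc and restricts to the identity on $F\setminus D$; it therefore satisfies $c\circ j=i$, whence $c_*\circ j_*=i_*$ and in particular $j_*$ is injective.

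With these tools in hand the equivalence of parities is immediate. The parity $p_u^{\Z_2}(v)$ vanishes on $F$ iff $i_*\beta\in\{0,i_*\alpha\}$, and vanishes on $F'$ iff $j_*\beta\in\{0,j_*\alpha\}$. Assuming the first condition, the isomorphism $i_*$ forces $\beta\in\{0,\alpha\}$ in $H_1(F\setminus D;\Z_2)$, and applying $j_*$ yields the second. Conversely, assuming the second, applying $c_*$ together with $c_*\circ j_*=i_*$ yields the first. Since a trivial destabilization is merely the inverse of a trivial stabilization, the same argument covers both directions. The only step requiring a small amount of care is the construction and basic properties of the collapse map $c$, together with the verification that $\partial D$ already bounds modulo $2$ inside $F\setminus D$ so that $i_*$ is an isomorphism rather than merely a surjection; beyond that the proof is a direct diagram chase.
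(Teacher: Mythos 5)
Your proof is correct, and it is close in spirit to the paper's argument (both are elementary comparisons of $\Z_2$-homology of $F$ and $F'$ using the fact that $\pi(K)$ misses the stabilization region), but the packaging is genuinely different and, I would say, cleaner. The paper works with the short exact sequence of the pair $(F',H)$, where $H$ is the attached handle, and then rules out by a case analysis the possibility that a class trivial in $H_1(F;\Z_2)/[K]$ fails to be trivial in $H_1(F';\Z_2)/[K']$; the implicit ``projection'' $H_1(F';\Z_2)/[K']\to H_1(F;\Z_2)/[K]$ it uses is exactly your collapse map $c_*$. Your version replaces the kernel analysis by the single observation that the inclusion $i\colon F\setminus D\hookrightarrow F$ induces an isomorphism on $H_1(\cdot;\Z_2)$ (because $[\partial D]=0$ in $H_1(F\setminus D;\Z_2)$), after which both implications become a two-line diagram chase through $i_*$, $j_*$, and $c_*\circ j_*=i_*$. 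This buys two things: it makes completely transparent where triviality of the stabilization is used (the collapse $c$ and the isomorphism $i_*$ both fail for a handle attached along two discs lying in different complementary regions of $\pi(K)$, consistent with the Remark following the lemma), and it avoids the paper's slightly delicate discussion of classes lifting to $H_1(H;\Z_2)$. The two points you flag as needing care --- extending $\mathrm{id}_{\partial D}$ over the punctured torus into the contractible disc $D$ to build $c$, and the nullhomology of $\partial D$ in $F\setminus D$ --- are both standard and go through without difficulty, so I see no gap.
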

\begin{proof}
    Suppose that a Legendrian knot $K'$ in $F'$ is obtained by a trivial stabilization of the Legendrian knot $K$ in $F$. In particular, the surface $F'$ is obtained from $F$ by removing two small open discs in the interior of 
a disc in the complement $F\setminus \pi(K)$ and attaching a handle $H$ diffeomorphic to a cylinder along the created boundary components. Then $F'/H$ is homeomorphic to $F\vee S^1$, and there is a homology exact sequence of the pair $(F', H)$, 
    \[
     0\longrightarrow H_1(H; \Z_2)\longrightarrow H_1(F'; \Z_2)\longrightarrow H_1(F; \Z_2)\oplus \Z_2\longrightarrow 0. 
    \]
    Since $H_1(H; \Z_2)=\Z_2$ is generated by the meridian of the handle, and the front projection $\pi(K)$ is disjoint from $H$, we conclude that there is a short exact sequence of vector spaces
    \[
     0\longrightarrow H_1(H; \Z_2)\longrightarrow H_1(F'; \Z_2)/[K']\longrightarrow H_1(F; \Z_2)/[K]\oplus \Z_2\longrightarrow 0.
    \]

    Every crossing $v$ in $K$ corresponds to a crossing $v'$ in $K'$, which, in its turn, determines two halves $K'_{v',1}$ and $K'_{v',2}$. In view of the above homology exact sequence, if $[K'_{v',1}]=[K'_{v',2}]$ is trivial in $H_1(F'; \Z_2)/[K']$, then it is also trivial in $H_1(F; \Z_2)/[K]$. On the other hand, if it is non-trivial in $H_1(F'; \Z_2)$, but it is in the kernel of the projection 
    \[
    H_1(F'; \Z_2)/[K']\longrightarrow H_1(F; \Z_2)/[K],
    \]
    then either $[K']$ has a non-trivial component in $H_1(S^1; \Z_2)=\Z_2$ or it
    lifts to a non-trivial class in $H_1(H; \Z_2)$.
    Since the knot is disjoint from the attached handle the former is impossible. The latter is impossible since 
    the knot is disjoint from the attached handle, and since the stabilization is trivial.  Thus, $[K_{v, 1}]$ is trivial if and only if $[K'_{v', 1}]$ is trivial.   
\end{proof}

\begin{rem} 
In general a stabilization/destabilization may change the parity of a crossing. For example, let $F'$ be a sphere with two handles. Its homology group $H_1(F'; \Z_2)$ is isomorphic to $\Z_2[m_1, \ell_1, m_2, \ell_2]$, where $m_1$ and $m_2$ are the homology classes of the meridians of the two handles, while $\ell_1$, $\ell_2$ are the homology classes of the two longitudes. There is a knot $K'$ in $ST^*F'$ whose projection $\pi(K')$ is a curve with a unique crossing $v$ that breaks $\pi(K')$ into two loops: a loop $K'_{v,1}$ representing the homology class $m_1$, and a loop $K'_{v,2}$ representing the homology class $m_2$. Then in the quotient group $G'=H_1(F'; \Z_2)/[K']$ the classes $m_1$ and $m_2$ represent the same equivalence class, the group $G'$ itself 
is isomorphic to $\Z_2[m_1, \ell_1, \ell_2]$, and the classes $[K_{v, 1}]=[K_{v,2}]=m_1$ are non-trivial. In particular, the crossing $v$ is odd. 

We may choose $K'$ so that its projection is disjoint from a destabilizing curve $\alpha$ on $F'$ representing the homology class $m_1$. Let $F$ denote the surface obtained from $F'$ by the destabilization along the curve $\alpha$. Then the group $G=H_1(F; \Z_2)/[K]$ is isomorphic to $\Z_2[m_2, \ell_2]/[K]$, where $[K]$ is the homology class of the projection $\pi(K)$ of the destabilized knot. The destabilized knot $K$ still has a unique crossing which we continue denoting $v$. The corresponding two loops $K_{v, 1}$ and $K_{v,2}$ now represent the homology classes $0$ and $m_2$ respectively. Therefore, the group $G$ is isomorphic to $\Z_2[\ell_2]$, and the loops $K_{v, 1}$ and $K_{v,2}$ represent the zero class in $G$. Consequently, the crossing $v$ of $K$ is even.  

Thus, indeed, in this case the destabilization of $K'$ turns the odd crossing $v$ into an even crossing. 
\end{rem}

There is an equivalent definition of the homological parity given in \cite{ManturovProjection}. Namely, 
a regular neighborhood of the projection $\pi(K)$ of $K$ in $F$ is a surface $S'$ with boundary. It consists of crosses (corresponding to classical crossings in $D$) and bands along $\pi(K)$. The boundary components of $S'$ are called \emph{pasted cycles}. The projection of a regular neighborhood of $\pi(K)$ to $\pi(K)$ takes every pasted cycle $\ell$ to a curve $\ell'$ in $\pi(K)$. By definition, the group $G(D)$ is the abelian group generated by elements $a_i$ corresponding to the crosses in $S'$ with two types of relations. First, we have $2a_i=0$ for every generator $a_i$, and, second, for every pasted cycle $\ell$, the sum of elements corresponding to crossings along the projected pasted cycle $\ell'$ is trivial. In particular, every crossing corresponds to an element in the group $G(D)$. The parity $p$ is defined by associating with each crossing $a_i$ the corresponding element $a_i$ in the group $G(D)$. 

 We say that a crossing is {\it even,\/} if it corresponds to a trivial element in the group $G(D)$. Otherwise, the crossing is said to be {\it odd.\/} 

\begin{thm}[Manturov] 
The two definitions of odd and even crossings are equivalent under the assumption that the complement $F\setminus \pi(K)$ is a disjoint union of discs. This assumption is clearly satisfied for any irreducible Legendrian knot $K$,  and in the case where $(F, K)$ is the canonical representative of a virtual knot diagram.
\end{thm}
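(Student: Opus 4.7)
The plan is to set up a natural homomorphism from Manturov's group $G(D)$ to the homology quotient $H_1(F;\Z_2)/\langle [\pi(K)]\rangle$ and to verify that it detects triviality of generators in both directions. Under the hypothesis that $F\setminus \pi(K)$ is a disjoint union of discs, the trace of $\pi(K)$ endows $F$ with a CW-structure: $0$-cells are the crossings in $V(D)$, $1$-cells are the arcs of $\pi(K)$ between consecutive crossings, and $2$-cells are the complementary discs. The class $[\pi(K)]$ is then the sum of all $1$-cells, and for each crossing $v$ the smoothed loop $K_{v,1}$ is literally a $1$-cycle in this CW chain complex, since it uses two of the four edge-ends at $v$ and either zero, two, or four of the edge-ends at every other crossing, so $\partial K_{v,1}=0$ modulo $2$.

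I will then define
\[
\phi\colon G(D)\longrightarrow H_1(F;\Z_2)/\langle[\pi(K)]\rangle,\qquad \phi(a_v)=[K_{v,1}],
\]
noting that the quotient makes the choice of $K_{v,1}$ over $K_{v,2}$ immaterial since $[K_{v,1}]+[K_{v,2}]=[\pi(K)]$. The theorem is equivalent to the claim that $\phi(a_v)=0$ if and only if $a_v=0$. For well-definedness of $\phi$, the relation $2a_v=0$ holds trivially in the $\Z_2$-target; and for each pasted cycle relation attached to a disc face $D$ it suffices to show that the sum $\sum [K_{v,1}]$, taken over the corners of $D$, vanishes in $H_1(F;\Z_2)/\langle[\pi(K)]\rangle$. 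The pasted cycle $\ell$ of $D$ bounds inside $D$, hence vanishes in $H_1(F;\Z_2)$, and is homologous to the $1$-chain it traces along $\pi(K)$; a local analysis at each corner on $\partial D$ identifies this $1$-chain with $\sum [K_{v,1}]$ up to a multiple of $[\pi(K)]$, delivering the relation.

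For the converse, suppose $[K_{v,1}]$ is trivial modulo $[\pi(K)]$. After replacing $K_{v,1}$ by $K_{v,2}$ if necessary, the cycle $K_{v,1}$ bounds a $2$-chain $T=\sum_{D\in S} D$ in $C_2(F;\Z_2)$ for some subset $S$ of disc faces. Summing the pasted cycle relations of the faces in $S$ in $G(D)$ gives the identity $\sum_u n_u\, a_u=0$, where $n_u$ counts corners of $S$ meeting the crossing $u$. A case analysis of the four edge-ends at each crossing $u\ne v$ (both strands of $\pi(K)$ inside $S$, both outside, or one of each) shows that the four local faces must form a checkerboard or a uniform pattern, so $n_u$ is always even; at $u=v$, the two edge-ends of $K_{v,1}$ at $v$ share a single corner, forcing exactly one or three of the four local faces at $v$ to lie in $S$, making $n_v$ odd. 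Reducing modulo $2$ yields $a_v=0$ in $G(D)$. The main obstacle is carrying out this local corner bookkeeping at $u=v$ correctly, since the chain $\partial T$ differs locally from $K_{v,1}$ by a pair of edges at $v$ which must be absorbed by the ambiguity between $K_{v,1}$ and $K_{v,2}$ modulo $[\pi(K)]$.
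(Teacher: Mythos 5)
Your proposal is correct, and its first half coincides with the paper's: both set up the same homomorphism $a_v\mapsto [K_{v,1}]$ from $G(D)$ to $H_1(F;\Z_2)/[K]$ and both verify the pasted-cycle relations by observing that under the all-discs hypothesis each pasted cycle is null-homologous while its trace along $\pi(K)$ equals $\sum_v K_{v,1}$ up to a multiple of $[\pi(K)]$ (the paper outsources this last identity to \cite[Lemma 4.6]{IMN}; your ``local analysis at each corner'' is the same fact, and note it really must be checked at every crossing the pasted cycle meets, not only where it turns). Where you genuinely diverge is the converse direction. The paper proves more, namely that $h$ is an isomorphism, by presenting $H_1(F;\Z_2)/[K]$ as the quotient of $H_1(\pi(K);\Z_2)/[K]\cong\Z_2\langle [v_i]\rangle$ by the $2$-cell relations and matching this with the presentation of $G(D)$; this is clean but silently relies on the nontrivial fact that the classes $[K_{v_i,1}]$ freely generate $H_1(\pi(K);\Z_2)/[K]$. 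You instead prove only the statement actually needed --- that $[K_{v,1}]=0$ forces $a_v=0$ --- by exhibiting a cellular $2$-chain $T$ with $\partial T=K_{v,1}$ (after possibly swapping $K_{v,1}$ with $K_{v,2}$, which is legitimate since exactly one of the two bounds when the class is trivial modulo $[\pi(K)]$), summing the corresponding pasted-cycle relations, and counting corners mod $2$: your local case analysis (opposite or all-or-nothing edge-ends at $u\ne v$ giving even corner counts, two adjacent edge-ends at $v$ giving an odd count) is accurate, because the oriented smoothing at $v$ does pair adjacent edge-ends. This buys you a more elementary, self-contained converse that avoids the free-generation lemma, at the cost of not identifying the two groups; the paper's route yields the stronger group isomorphism, which is what underlies the universality of the homological parity discussed afterwards.
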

\begin{proof} There is a homomorphism $h\colon G(D)\to H_1(F; \Z_2)/[K]$ defined by taking the element of a crossing $v$ in $G$ to the equivalence class $[v]$ representing $[K_{v, 1}]=[K_{v, 2}]$. The  homomorphism $h$ is well-defined since every element in the target group is of order $2$, and for every pasted cycle $\gamma$, we have $\sum_{v_i\in \gamma} [v_i]=0$ for the crossings $v_i$ along $\gamma$. To prove the latter, we observe that $\gamma$ inherits an orientation from $K$, and can be parametrized by length to produce a piecewise smoothly immersed curve with singularities at crossings where the velocity vector of $\gamma$ rotates. There is an immersed parametrized curve obtained from $\gamma$ by attaching $\pm K_{v_i,1}$ or $\pm K_{v_i,2}$ for each $v_i$. It has no rotation vertices and therefore it is a multiple of $K$, see \cite[Lemma 4.6]{IMN}. In other words, the equivalence class of $[\gamma]+\sum_{v_i\in \gamma} [v_i]$ is trivial in $H_1(F; \Z_2)/[K]$, where the sum ranges over the crossings along $\gamma$. Finally, we observe that the homology class $[\gamma]$ is trivial since each pasted cycle belongs to the complement $F\setminus \pi(K)$ which is a disjoint union of discs.
Thus, indeed, the homomorphism $h$ is well-defined.

The group $H_1(\pi(K); \Z_2)/[K]$ is isomorphic to the group $\Z_2\langle [v_i]\rangle$, while the group $H_1(F; \Z_2)/[K]$  is obtained from 
\[
H_1(\pi(K); \Z_2)/[K]=\Z_2\langle [v_i]\rangle
\]
by factoring out 
the relations corresponding to the $2$-cells in the minimal relative CW-structure of $(F, \pi(K))$. 
More precisely, the group $H_1(F; \Z_2)/[K]$ is obtained from 
$H_1(\pi(K); \Z_2)/[K]$
by factoring by the subgroup generated by all elements $\gamma$ that are boundaries of the $2$-cells in the relative CW-structure. We already know that the elements $\gamma+\sum_{v_i\in \gamma} [v_i]$ are trivial in $H_1(F; \Z_2)/[K]$, and therefore, $\langle \gamma\rangle$ is generated by the elements 
\[
(\gamma+\sum_{v_i\in \gamma} [v_i]) -\gamma= \sum_{v_i\in \gamma} [v_i]. 
\]
Consequently, the group $H_1(F; \Z_2)/[K]$ is isomorphic to $G$. 
\end{proof}

The homological parity $p_u$ has a universal property that for any other parity $p$ for Legendrian knots in $ST^*F$ with coefficients in an abelian group $A$ there exists a unique homomorphism $\rho\colon H_1(F; \Z_2)/[K]\to A$ such that $p=\rho\circ p_u$. In particular, if a crossing $v$ is even with respect to the homological parity, then it is even with respect to any other parity. This is proved for generically immersed curves, \cite[Theorem 4.4]{IMN}, but the proof extends to knots in a thickened surface $F\times [I]$, see a comment before \cite[Lemma 4.8]{IMN}, and to Legendrian knots in $ST^*F$.

\section{The Projection Theorem for virtual Legendrian knots}\label{s:proj}

 Given a  diagram of a virtual knot $K$ in $\R^2$ with virtual crossings marked by small circles, 
the projection $upr(K)$ is defined using the homological parity $p_u$
by iteratively
\begin{itemize}
\item constructing the canonical surface $F$ for $K$, and
\item replacing odd classical crossings in the diagram of $K$ with virtual crossings.
\end{itemize}

\begin{thm}\label{mainprojection} The projection $upr$ takes a diagram of a virtual knot $K$ in $\R^2$ to a diagram of a classical knot in $\R^2$ so that: 
\begin{itemize}
\item Every classical crossing of $upr(K)$ corresponds to a classical crossing of $K$. In other words, the generalized Gauss diagram of $upr(K)$ is obtained from one of $K$ by erasing some of the chords.
\item If two diagrams represent the same virtual Legendrian isotopy class of Legendrian knots, then their images under the projection $upr$ are diagrams representing the same Legendrian isotopy class of knots. In particular, $upr$ defines a map from Legendrian virtual isotopy classes of virtual Legendrian knots to Legendrian isotopy classes of Legendrian knots. 
\item If a virtual Legendrian knot $K$ is virtual Legendrian isotopic to a classical Legendrian knot, then $upr(K)$ is virtual Legendrian
isotopic to $K$.
\end{itemize}

\end{thm}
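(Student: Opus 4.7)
The plan is to define $upr$ via an explicit iterative construction and then verify each of the three bullet points using the parity axioms for $p_u^{\Z_2}$, together with Lemma~\ref{l:pr5.1} and the universal property of the homological parity.

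First I would make the iteration precise: given a virtual Legendrian diagram $K$, pass to its canonical representative $(K_0, F_0)$ on the canonical surface $F_0$, evaluate $p_u^{\Z_2}$ on $V(K_0)$, and convert every odd chord in the generalized Gauss diagram to a virtual chord. The resulting virtual diagram $K_1$ has its own canonical surface $F_1$, on which I would repeat the procedure, producing a sequence $K_0, K_1, K_2, \ldots$ whose numbers of classical crossings strictly decrease at each non-trivial step. The iteration therefore stabilizes after finitely many steps at a virtual diagram $upr(K)$ all of whose remaining classical crossings are even with respect to $p_u^{\Z_2}$. I would then argue that this stable state is genuinely classical: when every classical crossing is even, the group-theoretic description of parity (whose equivalence with the homological description was established in the theorem preceding Section~\ref{s:proj}) combined with the canonical-surface genus formula $\chi(F) = o - c$ (for $o$ pasted cycles and $c$ classical crossings) forces $F$ to be $S^2$. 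Consequently $upr(K)$ is a classical Legendrian knot diagram, and the first bullet is immediate from the construction since $upr$ only erases chords.

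For the second bullet, I would verify that $upr$ descends to virtual Legendrian isotopy classes by checking each generating move. The Legendrian Reidemeister moves are treated one at a time: for a cusp birth or death, the new crossing has one of its loops null-homotopic and is therefore even, so it remains classical under $upr$ and yields the same move on $upr(K)$; for the fold-cusp and the two safe self-tangency moves, the parity axiom $p(v_1)+p(v_2)=0$ forces the two new crossings to share parity, so they are either both preserved or both erased; for the triple-point move, the axiom $p(v_1)+p(v_2)+p(v_3)=0$ combined with the argument in \cite[Theorem 6]{ManturovProjection} excludes the case of exactly one odd crossing, so either all three or only one remains classical, and in either case the corresponding Legendrian move is available on $upr(K)$. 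Invariance under trivial stabilizations and destabilizations of $F$ away from $\pi(K)$ is exactly Lemma~\ref{l:pr5.1}, and invariance under orientation-preserving diffeomorphisms of $F$ is transparent since such diffeomorphisms merely relabel vertices. Virtual Reidemeister moves do not affect classical chords and are also preserved.

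For the third bullet, if $K$ is virtually Legendrian isotopic to a classical Legendrian knot $L\subset ST^*S^2$, then $L$ itself has canonical surface $S^2$, the quotient $H_1(S^2;\Z_2)/[L]$ is trivial, every crossing of $L$ is even, and the iteration terminates immediately with $upr(L)=L$. Combined with the second bullet this gives $upr(K)$ Legendrian isotopic to $L$, hence virtually Legendrian isotopic to $K$. The main obstacle I anticipate is the genus argument in the first paragraph: one must show that when every classical crossing of a diagram on its canonical surface is even, the surface is necessarily $S^2$. This requires identifying the cycles $[K_{v,j}]$ together with $[K]$ as a generating set for $H_1(F;\Z_2)$ on the canonical surface and then arguing that their collective triviality modulo $[K]$ is incompatible with positive genus. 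The Reidemeister-move case analysis, particularly for the triple-point move, also requires care to ensure that each sub-case produces a legitimate classical or virtual Legendrian move on the projected diagram rather than an illegal configuration of strands and cusps.
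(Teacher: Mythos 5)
Your proposal is correct and follows essentially the same route as the paper: the iterative definition of $upr$ via the canonical surface and the homological parity, a move-by-move check using the parity axioms (a cusp birth produces a null-homotopic loop, hence an even crossing; the two crossings of a safe tangency or fold-cusp move share a parity), and the identification $G(D)\cong H_1(F;\Z_2)/[K]$ to conclude that a diagram all of whose classical crossings are even is supported on $S^2$, which also yields the third bullet because every crossing of a classical planar diagram is even. The one place you genuinely diverge is the triple-point move: you keep Manturov's trichotomy, retaining the cases ``all three even'' and ``exactly two odd'' (the latter disposed of by a semivirtual move), whereas the paper's Lemma~\ref{l:abg} computation asserts that all three crossings share the same parity and treats the all-odd case by observing that the Gauss diagrams agree. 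Since the three relations $[\alpha]=[\beta]+[\gamma]$, etc., reduce to the single relation $[\alpha]+[\beta]+[\gamma]=0$ in $H_1(F;\Z_2)/[K]$, the complete list of possible outcomes is: all even, exactly two odd, or all three odd; you should add the all-odd case to your analysis (it is harmless, as all three crossings become virtual and the Gauss diagram is unchanged), just as the paper's version omits the two-odd case --- neither omission breaks the argument. One small caution: the surface on which the parity is computed must be the one in which the complement of the diagram is a union of discs (for a classical planar diagram this is $S^2$), not the Seifert-algorithm surface of the definition, whose genus is positive already for the standard trefoil; your phrase ``$L$ itself has canonical surface $S^2$'' is only correct under the former reading, though the paper's own proof makes the same implicit substitution.
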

\begin{proof} 
To prove the first statement, we note that given a knot $K$, the definition of $upr(K)$ does not involve Reidemeister moves of any intermediate knots.

Next, if $K'$ is obtained from $K$ by the first Legendrian Reidemeister move and has an extra crossing $v$, then $v$ is even, and therefore, the generalized Gauss diagrams of $K$ and $K'$ are the same. 

Suppose now that $K'$ is obtained from $K$ by the second Legendrian Reidemeister move (cusp pinching a branch or safe tangency move)  by introducing two new classical crossings $v_1$ and $v_2$. By the second axiom of parities, either both of these crossings are even, or both are odd. If both crossings are odd, then the generalized Gauss diagrams of $K$ and $K'$ coincide. If both crossings are even, then $upr(K')$ is obtained from $upr(K)$ by the second virtual Legendrian Reidemeister move. 

Finally, suppose that $K'$ is obtained from $K$ by the third Legendrian Reidemeister move. Let $v_1, v_2$ and $v_3$ denote the three classical crossings involved in the move. 
The curve $\pi(K')$ consists of a triangle with vertices $v_1, v_2$ and $v_3$, and three arcs $\alpha$, $\beta$ and $\gamma$ with end points at the three vertices $v_1, v_2, v_3$. We will denote the segment of the triangle between the two vertices $v_i$ and $v_j$ by $v_{ij}$. 

\begin{lem}\label{l:abg} Each of $\alpha$, $\beta$ and $\gamma$ is one of $K'_{v_i, j}$.
\end{lem}
\begin{proof}  We may assume that the curve $\pi(K')$ is the concatenation of arcs $\alpha* v_{i_1i_2} *\beta * v_{i_3i_4} * \gamma * v_{i_5, i_6}$, where $i_1,..., i_6$ are numbers in $\{1, 2, 3\}$, and the base point is the point $v_{i_6}$. Suppose, for example, that $i_6=1$. Then 
\[
 K'_{v_1, 1} * K'_{v_1, 2} = \alpha* v_{i_1i_2} *\beta * v_{i_3i_4} * \gamma * v_{i_5, 1}.
\]
We note that $i_1\ne i_2$, $i_3\ne i_4$ and $i_5\ne i_6$ since the curve $\pi(K')$ contains all three segments of the Reidemeister triangle. 
Since the single segment $v_{i_5, 1}$ cannot be the loop $K_{v_1,2}'$, we deduce that 
$K'_{v_1,1}$ can only be 
\begin{itemize}
\item the arc $\alpha$, 
\item the arc $\alpha*v_{i_1,i_2}$, 
\item the arc $\alpha* v_{i_1i_2} *\beta$, or 
\item the arc $\alpha* v_{i_1i_2} *\beta * v_{i_3i_4}$.
\end{itemize}
In the first two cases $K'_{v_1, 1}$ contains only one arc $\alpha$, while $K'_{v_1,2}$ contains two arcs $\beta$ and $\gamma$. In the third and forth cases we can relabel $K'_{v_1, 1}$ and $K'_{v_1,2}$ so that $K'_{v_1,1}$ again contains only one arc out of the three arcs $\alpha, \beta$ and $\gamma$. Similarly, for $i=2$ and $i=3$, by relabeling $K'_{v_i, 1}$ with $K'_{v_i, 2}$ when necessary, we may assume that each of $K'_{v_i, 1}$ contains only one of the arcs $\alpha$, $\beta$, $\gamma$, while $K'_{v_i, 2}$ contains the other two arcs from the set $\{\alpha, \beta, \gamma\}$.  We note that the loops $K'_{v_i, 1}$ and $K'_{v_j,1}$ may contain a common edge $v_{ij}$.

To prove Lemma~\ref{l:abg}, it suffices to show that if $K_{v_i, 1}$ contains one of the arcs in \{$\alpha,\beta, \gamma$\}, then non of $K_{v_j, 1}$ with $j\ne i$ contains the same arc.

Suppose that $K_{v_1,1}$ is $\alpha$, in particular, the end points of $\alpha$ are the same. Then the orientation of $\alpha$ determines the orientation of $v_{12}$ and $v_{13}$. Therefore, the values of $i_2, i_3, ...$ in 
\[
K'_{v_1, 2} = v_{1i_2} *\beta * v_{i_3i_4} * \gamma * v_{i_5, 1}
\]
are determined by the orientation of $v_{23}$. In both cases $\alpha\ne K_{v_i, 1}$ for $i\ne 1$. The same conclusion is true when the end points of $\alpha$ are distinct, i.e, $K_{v_1,1}=\alpha*v_{i_1, i_2}$, where we also consider essentially two cases. This concludes the proof of Lemma~\ref{l:abg}.
\end{proof}

Let $\rho\colon H_1(F; \Z_2)/[K']\to \Z_2$ denote the projection.
Then using $p_u^{\Z_2}=\rho\circ p_u$, in view of Lemma~\ref{l:abg}, we have 
\[
  \rho[\alpha] = \rho[\beta] + \rho[\gamma],
\]
\[
  \rho[\beta] =\rho[\alpha]+\rho[\gamma],
\]
and
\[
\rho[\gamma] = \rho[\alpha] +\rho[\beta]
\]
where $[\alpha]$, $[\beta]$ and $[\gamma]$ are elements in $H_1(F; \Z_2)/[K']$. 
We deduce that $\rho[\alpha]=\rho[\beta]=\rho[\gamma]$. In other words, either all crossings involved in the third Legendrian Reidemeister move are even, or all are odd. If all crossings are even, then $upr(K')$ is obtained from $upr(K)$ by the third Legendrian Reidemeister move. If all crossings are odd, then the diagrams of $upr(K')$ and $upr(K)$ are the same. 

Thus, the values of the projection $upr$ are the same on Legendrian knots that are Legendrian virtually isotopic.

 Let now $F$ be the canonical surface associated with a diagram of a virtual Legendrian knot $K$.
 Then $H_1(F; \Z_2)/[K]$ can be identified with the group $G(D)$ whose generators are in bijective correspondence with the crossings $a_i$. Suppose that all vertices of a Legendrian knot $K$ in $ST^*F$ are even. We recall that a crossing $a_i$ is even if the corresponding generator in $G(D)$ is trivial. Since all crossings are even, we deduce that the group $G(D)$ is trivial. Consequently, the abelian group $H_1(F; \Z_2)$ is generated by $[K]$. This is possible only if the surface $F$ is a sphere. Consequently, if all vertices of a knot are even, then the knot is classical. In particular, the values of the projection $upr$ are classical Legendrian knots in $ST^*S^2$. 

Finally, suppose that a Legendrian knot $K$ in $ST^*F$ is Legendrian virtually isotopic to a knot $K'$ in $ST^*S^2$. Then $upr(K)=upr(K')$. On the other hand, the group $H_1(F; \Z_2)/[K']$ is trivial, and therefore, all vertices of $K'$ are even. Consequently, $upr(K')=K'$.
\end{proof}

{\bf Acknowledgment:} The authors are thankful to Semen Podkorytov for many valuable comments and questions.

\end{document}